\pgfplotsset{compat=1.18}
\numberwithin{equation}{section}
\newtheorem{theorem}{Theorem}[section]
\newtheorem{corollary}{Corollary}[section]
\newtheorem{definition}{Definition}[section]
\newtheorem{lemma}{Lemma}[section]
\newtheorem{example}{Example}[section]
\newtheorem{proposition}{Proposition}[section]
\theoremstyle{remark}
\newtheorem{remark}{Remark}[section]
\newcommand{\D}{\mathbb{D}}
\title[Volterra-type integral operator]
{Volterra-type integral operator on analytic function spaces}
\subjclass[2020]{47B38; 30C45}
\keywords{Univalence; Convexity; Carath\'eodory function;  Volterra-type integral operator; Conjecture.}
\newcounter{minutes}\setcounter{minutes}{\time}
\newcounter{hours}\setcounter{hours}{\time}
\begin{document}
\begin{abstract}
We investigate the geometric properties of the Volterra-type integral operator
\begin{equation*}
T_g[f](z) = \int_{0}^{z} f(s)\, g'(s)\, ds, \quad |z|<1,
\end{equation*}
acting on various subclasses of analytic functions in the unit disk.  
Sharp estimates are obtained for the convexity radius of $T_g$, which simultaneously determine its univalence radius, across several classical function families.  
In addition, we introduce and study higher-order Volterra-type operators, establish their normalized forms, and propose an open question on the scaling behavior of their convexity radii.  
\end{abstract}

\def\thefootnote{}
\phantomsection
\footnotetext{
\texttt{\tiny File:~\jobname .tex,
          printed: \number\year-\number\month-\number\day,          \thehours.\ifnum\theminutes<10{0}\fi\theminutes}}
\makeatletter\def\thefootnote{\@arabic\c@footnote}\makeatother

\author[R. Kargar] {Rahim Kargar}
\address{Department of Mathematics and Statistics, University of Turku,
         Turku, Finland}
\email{rahim.r.kargar@utu.fi}

\maketitle

\section{Introduction}

The study of integral operators acting on spaces of analytic functions has been a central theme in complex analysis and operator theory. A particularly important example is the Volterra-type operator, introduced by Pommerenke in 1977~\cite{Pommerenke1977}, defined for analytic functions \(f\) and \(g\) in the open unit disk \(\mathbb{D}\) by
\begin{equation}\label{eq:Tg}
T_g[f](z):= \int_0^z f(s) g'(s)\, ds, \quad |z|<1.
\end{equation}
Pommerenke showed that \(T_g\) is bounded on the Hardy space \(H^2\) if and only if \(g\) belongs to \(\mathrm{BMOA}\). This characterization was later extended by Aleman and Siskakis~\cite{Aleman} to all Hardy spaces \(H^p\) (\(1 \le p < \infty\)), and the corresponding compactness criterion was established using \(\mathrm{VMOA}\).
A closely related operator to the Volterra-type integral operator $T_g$ is
\begin{equation*}
J_g[f](z) := \int_0^z f'(s) g(s)\, ds, \quad z\in\mathbb{D},
\end{equation*}
which satisfies the identity
\[
T_g[f](z) + J_g[f](z) = M_g [f](z) - f(0) g(0),
\]
where \(M_g\) denotes the multiplication operator \(M_g [f](z)=g(z) f(z)\). 
These operators naturally act on various spaces of analytic functions, including Hardy, Bergman, Bloch, and Zygmund spaces, and they encompass several well-known operators in geometric function theory, such as the generalized Bernardi--Libera--Livingston operators~\cite{Bernardi,Libera,Livingston}, the Srivastava--Owa fractional derivative operators~\cite{Owa1978,OwaSri}, and the Ces\`{a}ro operator~\cite{Siskakis1987,Siskakis1990}.  

From the perspective of geometric function theory, a natural question is how the analytic and geometric properties of \(f\) and \(g\) influence the univalence of \(T_g\). Notably, even if both \(f\) and \(g\) are univalent in \(\mathbb{D}\), their Volterra composition \(T_g\) need not be univalent. A simple example is \(f(z)=g(z)=z\), for which \(T_g[f](z)=z^2/2\) is clearly not univalent. This observation motivates the study of the radius of convexity (or injectivity) for \(T_g\), measuring how far from the origin the operator preserves convexity or univalence.

In this paper, of course, we focus on sharp estimates for the radius of convexity of the Volterra-type integral operator $T_g$ when \(f\) belongs to the class
\(\mathcal{P}\) of analytic functions with positive real part and $f(0)=1$ while \(g\) belongs to several well-known
subclasses of analytic functions.
Furthermore, we introduce and study the \textit{normalized} higher-order Volterra-type operator
\begin{equation*}
\widetilde T_{g,n}[f](z)
:=\frac{n!}{g'(0)}\,
\frac{T_{g,n}[f](z)}{z^{\,n-1}},\quad n=1,2,\ldots,\, g'(0)\neq0,
\end{equation*}
where 
\begin{equation*}
T_{g,n}[f](z)
= \underbrace{\int_0^z \int_0^{t_1} \!\!\cdots \int_0^{t_{n-1}}}_{n\ \text{times}}
f(t_n)\, g'(t_n)\, dt_n \cdots dt_1,
\quad z\in\mathbb{D},
\end{equation*}
such that \(\widetilde{T}_{g,n}[f](0) =0\) and \((\widetilde{T}_{g,n}[f])'(0)=1.\)
We then investigate its geometric properties and formulate an open question
describing how the radius of convexity scales with the order \(n\).

The structure of the paper is as follows. 
In Section~\ref{sec:Subclass}, we review important subclasses of analytic functions and collect key lemmas. 
Section~\ref{sec:convexity radius-T_f} establishes sharp results for the radius of convexity of \(T_g\) for various combinations of \(f\) and \(g\). 
Finally, Section~\ref{sec:higher-order} extends these results to higher-order Volterra-type operators, introduces their normalized versions, and concludes with an open question on the asymptotic scaling behavior of their convexity radii.

\section{Preliminaries}\label{sec:Subclass}

In this section, we recall several important subclasses of analytic functions that will be used throughout the paper. We use $``:="$ to denote the definition of the left-hand side by the right-hand side.

Let 
\[
\mathbb{D} := \{ z \in \mathbb{C} : |z|<1 \}
\] 
denote the open unit disk and let $\mathcal{H}(\mathbb{D})$ be the class of all analytic functions in $\mathbb{D}$. The class of all normalized analytic functions in $\mathbb{D}$ is denoted by $\mathcal{A}$, that is,
\[
f \in \mathcal{A} \quad \Longleftrightarrow \quad f \in \mathcal{H}(\mathbb{D}), \; f(0)=0, \; f'(0)=1.
\]
Let $\mathcal{U} \subset \mathcal{A}$ denote the subclass of univalent (one-to-one) functions in $\mathbb{D}$.  
For $f,g \in \mathcal{H}(\mathbb{D})$, we say that $f$ is subordinate to $g$, written $f(z)\prec g(z)$, if there exists a Schwarz function $w$ with $w(0)=0$ and $|w(z)|\le 1$ such that
\[
f(z) = g(w(z)), \quad z \in \mathbb{D}
\]
for all $z\in\mathbb{D}$. 
In particular, if $g$ is univalent, then
\[
f(0) = g(0) \quad \text{and} \quad f(\mathbb{D}) \subset g(\mathbb{D}).
\]

\subsection*{Carath\'eodory and convex functions}
Let $\mathcal{P}$ denote the Carath\'eodory class of functions $p \in \mathcal{H}(\mathbb{D})$ satisfying
\[
p(0)=1 \quad \text{and} \quad {\rm Re}\{p(z)\}>0, \quad z \in \mathbb{D}.
\]
A function $f \in \mathcal{A}$ is convex of order $\alpha$, $0 \le \alpha < 1$, if
\[
{\rm Re} \left\{ 1 + z \frac{f''(z)}{f'(z)} \right\} > \alpha, \quad z \in \mathbb{D}.
\]
The class of convex functions of order $\alpha$ is denoted by $\mathcal{K}(\alpha)$.

\subsection*{Janowski-type starlike and convex functions}
Let $A,B \in \mathbb{C}$ with $A\neq B$ and $|B|\le 1$. A function $f \in \mathcal{A}$ belongs to the class $\mathcal{S}^*_G(A,B)$ if
\[
z \frac{f'(z)}{f(z)} \prec \frac{1+Az}{1+Bz}, \quad z \in \mathbb{D}.
\]
This generalizes Janowski starlike functions; if $A$ and $B$ are real with $-1 \le B < A \le 1$, we recover the classical Janowski starlike class. For complex parameters, univalence is not guaranteed (e.g., $A=2$, $B=1$ or $B=-1$).  

The associated convex class is
\[
f \in \mathcal{K}_G(A,B) \quad \Longleftrightarrow \quad z f'(z) \in \mathcal{S}^*_G(A,B).
\]
In particular, $\mathcal{K}_G(2,1)$ and $\mathcal{K}_G(2,-1)$ reduce to classical Ozaki conditions. By Lindel\"of's subordination principle, if $f \in \mathcal{K}_G(2,1)$, then
\begin{equation}\label{Ozaki 3 2}
{\rm Re}\left\{ 1 + z \frac{f''(z)}{f'(z)} \right\} < \frac{3}{2}, \quad z \in \mathbb{D}
\end{equation}
and if $f \in \mathcal{K}_G(2,-1)$, then
\begin{equation}\label{Ozaki 1 2}
{\rm Re}\left\{ 1 + z \frac{f''(z)}{f'(z)} \right\} > -\frac{1}{2}, \quad z \in \mathbb{D}.
\end{equation}
Ozaki \cite{Ozaki} showed that if $f \in \mathcal{A}$ with $f(z)f'(z)/z \neq 0$ and either \eqref{Ozaki 3 2} or \eqref{Ozaki 1 2} holds, then $f$ is univalent and convex in at least one direction.  

Let
\[
\mathcal{P}[A,B] := \left\{ p \in \mathcal{P} : p(z) \prec \frac{1+Az}{1+Bz} \right\}.
\]

\subsection*{Locally univalent and linear invariant families}
Let $\mathcal{LU}$ denote the class of normalized locally univalent functions. For $\beta \in \mathbb{R}$, define
\[
\mathcal{G}(\beta) := \left\{ f \in \mathcal{LU} : {\rm Re} \left\{ 1 + z \frac{f''(z)}{f'(z)} \right\} < 1 + \frac{\beta}{2}, \; z \in \mathbb{D} \right\}.
\]
It is known that $\mathcal{G}(1) \subset \mathcal{U}$, and the functions in $\mathcal{G}(1)$ are starlike \cite{KargarJMAA,OPW}.

Let $Aut(\mathbb{D})$ denote the group of holomorphic automorphisms of $\mathbb{D}$. A family $\mathcal{F} \subset \mathcal{A}$ is a linear invariant family (LIF) if $\mathcal{F} \subset \mathcal{LU}$ and
\[
F_\phi(f)(z) := \frac{f(\phi(z)) - f(\phi(0))}{f'(\phi(0)) \phi'(0)} \in \mathcal{F}, \quad f \in \mathcal{F}, \; \phi \in Aut(\mathbb{D}).
\]
The order of $\mathcal{F}$ is defined by
\[
{\rm ord}\,\mathcal{F} := \sup_{f \in \mathcal{F}} \left| \frac{f''(0)}{2} \right|.
\]

\subsection*{Functions of bounded boundary rotation}
For $k \ge 2$, let $V_k$ denote the class of functions $f \in \mathcal{A}$ such that
\[
\int_0^{2\pi} \left| {\rm Re} \left\{ 1 + z \frac{f''(z)}{f'(z)} \right\} \right|_{z=re^{i\theta}} d\theta \le k \pi.
\]
Functions in $V_k$ map $\mathbb{D}$ conformally onto domains with boundary rotation at most $k \pi$. This class was introduced by Loewner \cite{Low} and developed by Paatero \cite{Paa1}. Robertson \cite[Theorem 1]{Robertson1969} proved that for $f \in V_k$,
\begin{equation}\label{ineq Vk}
\left| z \frac{f''(z)}{f'(z)} - \frac{2|z|^2}{1-|z|^2} \right| \le \frac{k |z|}{1-|z|^2}, \quad z \in \mathbb{D}, \; 2 \le k < \infty.
\end{equation}

To validate our findings, we require the following lemmas. 


\begin{lemma}\label{lem estimate univalent functions}{\rm(}see \cite[p. 15]{Graham}{\rm)}
  If $f\in \mathcal{U}$, then
\begin{equation*}
  \left|z\frac{f''(z)}{f'(z)}-\frac{2r^2}{1-r^2}\right|\leq \frac{4r}{1-r^2},\quad|z|=r<1.
\end{equation*}
The estimate is sharp for the rotation of the Koebe function $k(z)=z/(1-z)^2$.
\end{lemma}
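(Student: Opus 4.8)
The plan is to reduce the estimate to the classical Bieberbach bound $|a_2|\le 2$ (equivalently $|f''(0)|\le 4$) for a normalized univalent function $f(z)=z+a_2z^2+\cdots$, which I would quote as a known consequence of the area theorem and which is the only non--elementary ingredient. Fix $z_0\in\Delta$ and put $r=|z_0|$. I would introduce the disc automorphism
\begin{equation*}
\phi(\zeta)=\frac{\zeta+z_0}{1+\overline{z_0}\,\zeta}\qquad(\zeta\in\Delta),
\end{equation*}
for which one records the elementary identities $\phi(0)=z_0$, $\phi'(0)=1-r^2$ and $\phi''(0)=-2\overline{z_0}(1-r^2)$, and then pass to the Koebe transform
\begin{equation*}
F(\zeta)=F_{\phi}(f)(\zeta)=\frac{f(\phi(\zeta))-f(z_0)}{f'(z_0)\,\phi'(0)}\qquad(\zeta\in\Delta),
\end{equation*}
which belongs to $\mathcal{U}$ because $\mathcal{U}$ is a linear--invariant family; in particular $F$ is normalized univalent, so $|F''(0)|\le 4$.

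Next I would differentiate $F$ twice by the chain rule and evaluate at the origin. Since $\phi(0)=z_0$, the two terms produced are $\dfrac{f''(z_0)}{f'(z_0)}\phi'(0)$ and $\dfrac{\phi''(0)}{\phi'(0)}$, so that
\begin{equation*}
F''(0)=(1-r^2)\,\frac{f''(z_0)}{f'(z_0)}-2\overline{z_0}.
\end{equation*}
The coefficient bound $|F''(0)|\le 4$ then gives
\begin{equation*}
\left|(1-r^2)\,\frac{f''(z_0)}{f'(z_0)}-2\overline{z_0}\right|\le 4.
\end{equation*}

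Finally I would multiply this estimate by $z_0$ (of modulus $r$) and divide by $1-r^2$; using $z_0\overline{z_0}=r^2$, the left side collapses to $\bigl|z_0f''(z_0)/f'(z_0)-2r^2/(1-r^2)\bigr|$ and the right side to $4r/(1-r^2)$, which is the asserted inequality since $z_0$ was arbitrary. For sharpness I would test the Koebe function $k(z)=z/(1-z)^2$, for which $zk''(z)/k'(z)=(4z+2z^2)/(1-z^2)$; at $z=-r$ this equals $(2r^2-4r)/(1-r^2)$, whence $zk''/k'-2r^2/(1-r^2)=-4r/(1-r^2)$, and the rotations $e^{-i\theta}k(e^{i\theta}z)$ place the extremal point on every circle $|z|=r$. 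I do not expect a genuine obstacle: once $|a_2|\le 2$ is granted the argument is pure bookkeeping, the only point requiring mild care being the algebra of the last step that rewrites the estimate in the stated normalized form. One could alternatively read the lemma off from Lemma~\ref{lem Ebadian, Kargar} with $\gamma=2$ together with the inclusion $\mathcal{U}\subset\mathcal{UL}_2$, but that inclusion is itself essentially equivalent to $|a_2|\le 2$, so the two routes carry the same content.
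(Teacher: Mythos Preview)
Your argument is correct: the Koebe transform combined with the Bieberbach bound $|a_2|\le 2$ is exactly the classical route to this estimate, and your computations (including the sharpness check with the Koebe function) are clean. Note, however, that the paper does not supply its own proof of this lemma; it merely cites it from \cite[p.~15]{Graham}, where precisely the argument you outline is given. So there is nothing to compare against in the paper itself---you have reproduced the standard proof from the source being quoted.
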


\begin{lemma}\label{lem Obradovic2013}{\rm(}see \cite{OPW}{\rm)}
Let $\beta\in(0,1]$ be fixed. If $f\in \mathcal{G}(\beta)$, then
\begin{equation*}
  \left|\frac{f''(z)}{f'(z)}\right|\leq \frac{\beta}{1-|z|},\quad z\in\mathbb{D}.
\end{equation*}
The result is sharp for the function $f'(z)=(1-z)^\beta$.
\end{lemma}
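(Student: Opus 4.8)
The plan is to reduce the assertion to the classical distortion estimate for functions of positive real part. Since $f\in\mathcal{G}(\beta)\subset\mathcal{LU}$ we have $f'(z)\neq 0$ in $\Delta$, so $p(z):=1+zf''(z)/f'(z)$ is analytic in $\Delta$ with $p(0)=1$, and the defining condition \eqref{G beta} reads ${\rm Re}\,p(z)<1+\frac{\beta}{2}$ for all $z\in\Delta$. First I would introduce the auxiliary function
\[
P(z):=1+\frac{2}{\beta}\bigl(1-p(z)\bigr),
\]
which is analytic in $\Delta$, satisfies $P(0)=1$, and obeys ${\rm Re}\,P(z)=1+\frac{2}{\beta}\bigl(1-{\rm Re}\,p(z)\bigr)>0$; hence $P$ belongs to the Carath\'{e}odory class. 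Inverting this substitution gives $z\,f''(z)/f'(z)=p(z)-1=\frac{\beta}{2}\bigl(1-P(z)\bigr)$, so everything comes down to showing $\bigl|1-P(z)\bigr|\leq 2|z|/(1-|z|)$ on $\Delta$.

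For that step I would invoke the well--known representation of Carath\'{e}odory functions in Schwarz form, $P(z)=(1+w(z))/(1-w(z))$, where $w$ is a Schwarz function, i.e.\ $w$ is analytic in $\Delta$ with $w(0)=0$ and $|w(z)|\leq|z|$. A short computation yields $1-P(z)=-2w(z)/(1-w(z))$, so that
\[
\frac{1-P(z)}{z}=\frac{-2}{1-w(z)}\cdot\frac{w(z)}{z}.
\]
By the Schwarz lemma $|w(z)/z|\leq 1$, while $|1-w(z)|\geq 1-|w(z)|\geq 1-|z|$; combining these gives $\bigl|(1-P(z))/z\bigr|\leq 2/(1-|z|)$ for every $z\in\Delta$, the origin being included since $(1-P(z))/z$ extends analytically there. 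Multiplying through by $\beta/2$ produces exactly $|f''(z)/f'(z)|\leq\beta/(1-|z|)$.

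For sharpness I would reverse the chain of identities with the choice $w(z)=z$: then $P(z)=(1+z)/(1-z)$, whence $f''(z)/f'(z)=-\beta/(1-z)$, and integrating (with the normalization $f'(0)=1$) gives $f'(z)=(1-z)^{\beta}$; letting $z=r\to 1^{-}$ turns the inequality into an equality. I do not anticipate a genuine obstacle here: the points deserving a line of verification are that $P$ truly lies in the Carath\'{e}odory class (so that the Schwarz representation is legitimate) and that the quotient $(1-P(z))/z$ is holomorphic at $z=0$, which is what makes the bound uniform on the whole disc.
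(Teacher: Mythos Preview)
Your argument is correct. The paper does not supply its own proof of this lemma; it is quoted from \cite{OPW} as a known result, so there is nothing in the present paper to compare your approach against. The route you take---normalizing to a Carath\'{e}odory function $P$, writing $P=(1+w)/(1-w)$ with a Schwarz function $w$, and then applying the Schwarz lemma bound $|w(z)|\le|z|$---is a standard and clean way to obtain estimates of this kind.

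One tiny remark on the sharpness discussion: for the extremal function $f'(z)=(1-z)^{\beta}$ you actually get $|f''(r)/f'(r)|=\beta/(1-r)$ exactly for every $r\in(0,1)$, not merely in the limit $r\to 1^{-}$; so equality holds along the whole positive radius, which is slightly stronger than what you wrote.
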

The next lemma is due to Pommerenke \cite{Pom}, see also \cite[Lemma 5.1.3]{Graham}.
\begin{lemma}\label{lem linear invariant}
  Let $\mathcal{F}$ be a linear invariant family and $\delta={\rm ord}\mathcal{F}$. Then
\begin{equation*}\label{delta= sup sup}\delta=\sup_{f\in\mathcal{F}}\sup_{|z|<1}\left|-\overline{z}
    +\frac{1}{2}(1-|z|^2)\frac{f''(z)}{f'(z)}\right|.
  \end{equation*}
\end{lemma}

We next present a classical sharp estimate, along with its proof, for functions in the Carath\'eodory--Janowski class, which will play a crucial role in proving the main results.

\begin{lemma}\label{lem:est-Cara-Janowski-sharp}
Let $p\in \mathcal{P}[A,B]$ with real parameters $-1\le B<A\le 1$. For $|z|=r<1$, we have the sharp lower and upper bounds
\begin{equation}\label{eq:PAB-bound-sharp}
-\frac{(A-B)r}{(1-A r)(1-B r)}
\le\ {\rm Re}\left\{ z\frac{p'(z)}{p(z)} \right\} \le\frac{(A-B)r}{(1-A r)(1-B r)}.
\end{equation}
Equality is achieved on the positive real axis by extremal functions
\[
p_\pm(z)=\frac{1 \pm A z}{1 \pm B z}, \quad \text{or by rotations } p(z)=\frac{1+\varepsilon A z}{1+\varepsilon B z},\ |\varepsilon|=1.
\]
\end{lemma}

\begin{proof}
Since $p\in \mathcal{P}[A,B]$, by the subordination principle, there exists a Schwarz function $w:\mathbb{D}\to\overline{\mathbb{D}}$ with $w(0)=0$ such that
\[
p(z)=\frac{1+A w(z)}{1+B w(z)}, \quad z\in \mathbb{D}.
\]
A direct computation gives
\[
z\frac{p'(z)}{p(z)} = \frac{(A-B) z w'(z)}{(1+A w(z))(1+B w(z))}.
\]
By the Schwarz lemma, \(|w(z)| \le |z|\) and by the Schwarz–Pick lemma, we have
\begin{equation*}
|w'(z)| \le \frac{1-|w(z)|^2}{1-|z|^2},\quad  z\in\mathbb D.
\end{equation*}
Using these, we have
\begin{align*}
\left| z \frac{p'(z)}{p(z)} \right|
\le \frac{(A-B) |z|}{(1-A |z|)(1-B |z|)}
\le \frac{(A-B) r}{(1-A r)(1-B r)}.
\end{align*}
Therefore, the real part satisfies
\[
-\frac{(A-B)r}{(1-A r)(1-B r)} \le {\rm Re}\left\{ z \frac{p'(z)}{p(z)} \right\} \le \frac{(A-B)r}{(1-A r)(1-B r)}.
\]
Sharpness is obtained for the extremal functions
\[
p_\pm(z) = \frac{1 \pm A z}{1 \pm B z},
\]
or by one of its rotations for which the real part achieves the bounds on the positive real axis. This completes the proof. 
\end{proof}

If we take $A=1$ and $B=-1$ in Lemma \ref{lem:est-Cara-Janowski-sharp}, we get:
\begin{corollary}
Let \(p\) be a Carath\'eodory function, that is, \(p\in\mathcal P\).  
For \(|z|=r<1\) we have the sharp estimate
\[
\left| z\frac{p'(z)}{p(z)}\right| \le \frac{2r}{1-r^2},
\]
and consequently
\[
-\frac{2r}{1-r^2} \le {\rm Re}\left\{ z\frac{p'(z)}{p(z)} \right\} \le \frac{2r}{1-r^2}.
\]
Equality is attained (for each fixed \(r\)) by the extremal functions
\[
p_\varepsilon(z)=\frac{1+\varepsilon z}{1-\varepsilon z},\quad |\varepsilon|=1,
\]
with the choice \(\varepsilon=1\) giving the upper bound and \(\varepsilon=-1\) giving the lower bound on the positive real axis.
\end{corollary}

\section{The Convexity Radius of \texorpdfstring{$T_g$}{Tg}}\label{sec:convexity radius-T_f}
In this section, we obtain the sharp convexity radius of the Volterra-type integral operator $T_g$ when $g$ belongs to some well-known subclasses of analytic functions. 
Moreover, the first result of this section is as follows.
\begin{theorem}\label{thm:rad-convex-AB}
Let $A,B\in\mathbb{R}$ satisfy $-1\le B<A\le 1$ and let $0\le\alpha<1$. 
If $f\in\mathcal{P}[A,B]$ and $g\in\mathcal{K}_G(A,B)$, then the Volterra--type integral operator $T_g$
is convex of order $\alpha$ in the disk $|z|<r_c(A,B,\alpha)$, where \(r_c(A,B,\alpha)\) is the unique root of the equation
\begin{equation}\label{eq:root-equation}
\frac{1-(A-B)r-AB r^2}{1-B^2 r^2}-\frac{(A-B)r}{(1-A r)(1-B r)} - \alpha=0.
\end{equation}
Moreover, the result is sharp.
\end{theorem}

\begin{proof}
A direct computation using \eqref{eq:Tg} yields the identity
\begin{equation}\label{eq:1+zT''}
1+z\frac{T_g''(z)}{T_g'(z)}
= z\frac{f'(z)}{f(z)} + 1+z\frac{g''(z)}{g'(z)}.
\end{equation}
It is clear that if $f\in\mathcal{P}[A,B]$ and $g\in\mathcal{K}_G(A,B)$, then $T_g\in\mathcal{A}$, thus, $T_g(0)=T'_g(0)-1=0$. Therefore, 
we will find the maximal $r\in(0,1)$ such that
\begin{equation*}
\inf_{|z|=r}{\rm Re}\,\left\{1+z\frac{T_g''(z)}{T_g'(z)}\right\}=\inf_{|z|=r}{\rm Re}\,\left\{ z\frac{f'(z)}{f(z)} + 1+z\frac{g''(z)}{g'(z)}\right\}>\alpha.
\end{equation*}
Since \(f\in\mathcal P[A,B]\) and \(A,B\) are real with \(-1\le B<A\le1\), Lemma \ref{lem:est-Cara-Janowski-sharp} gives the sharp one–sided bound
\[
{\rm Re}\,\left\{ z\frac{f'(z)}{f(z)}\right\} \ge -\frac{(A-B)r}{(1-A r)(1-B r)}, \quad |z|=r.
\]
By hypothesis \(g\in\mathcal K_G(A,B)\); hence
\begin{equation*}
q(z):=1+z\frac{g''(z)}{g'(z)}\in\mathcal{P}[A,B].
\end{equation*}
Applying the Janowski pointwise disk to \(q\) (as in the standard Janowski estimate), we obtain the sharp lower bound
\[
{\rm Re}\,\{q(z)\} \ge \frac{1-(A-B)r-AB r^2}{1-B^2 r^2},\quad |z|=r.
\]
Adding the two lower bounds and using the identity \eqref{eq:1+zT''} yields, for \(|z|=r\),
\[
{\rm Re}\,\left\{1+z\frac{T_g''(z)}{T_g'(z)}\right\}
\ge -\frac{(A-B)r}{(1-A r)(1-B r)} + \frac{1-(A-B)r-AB r^2}{1-B^2 r^2}=:H(r).
\]
However, $T_g$ is convex of order alpha if and only if $H(r,\alpha):=H(r)-\alpha>0$.
Note that \(H(0,\alpha)=1-\alpha>0\) and, \(H(r,\alpha)\to -\infty\) as \(r\to1^{-}\). 
Moreover, \(H(r,\alpha)\) is continuous and strictly decreasing on \((0,1)\), hence, there exists a unique \(r_c:=r_c(A,B,\alpha)\in(0,1)\) satisfying \(H(r_c,\alpha)=0\). Therefore, $T_g$ is convex of order $\alpha$ in the disk $|z|<r_c(A,B,\alpha)$.

To show sharpness, take the Janowski extremals
\[
f_0(z)=\frac{1-Az}{1-Bz}\in\mathcal P[A,B],\quad g_0(z)=\frac{1}{A}\left(1-(1-Bz)^{\frac{A}{B}}\right),\quad A\neq0\neq B,
\]
where $g_0\in \mathcal{K}_G(A,B)$.
For these choices, we compute
\[
z\frac{f_0'(z)}{f_0(z)} = -\frac{(A-B)z}{(1-A z)(1-B z)},
\quad
1+z\frac{g_0''(z)}{g_0'(z)}=\frac{1-(A-B)z-AB z^2}{1-B^2 z^2},
\]
and consequently, by \eqref{eq:1+zT''},
\[
\operatorname{Re}\left\{1+z\frac{T_{g_0}''(z)}{T_{g_0}'(z)}\right\}
= H(|z|).
\]
In particular, for \(z=r>0\) real,
\[
\operatorname{Re}\left\{1+r\frac{T_{g_0}''(r)}{T_{g_0}'(r)}\right\}=H(r).
\]
If \(r_c\) is the unique root of \(H(r,\alpha)=0\), then the above shows
\[
\min_{|z|=r_c}\operatorname{Re}\left\{1+z\frac{T_{g_0}''(z)}{T_{g_0}'(z)}\right\}
=\alpha.
\]
Therefore, for every \(r>r_c\) we get \(H(r,\alpha)<0\), and the equality computation with the extremal shows there exists a point
on \(|z|=r\) (indeed the positive real point \(z=r\)) where
\(\operatorname{Re}\{1+zT_{g_0}''(z)/T_{g_0}'(z)\}<\alpha\). Consequently the radius \(r_c(A,B,\alpha)\) is best possible:
it cannot be replaced by any larger radius common to all admissible \((f,g)\). This completes the proof.
\end{proof}

\begin{remark}
    Here, we give two special cases of the above convexity radius $r_c(A,B,\alpha)$.\\
    \paragraph{\textbf{Case 1:} \(A=1,\; B=-1\).}
 Hence, \eqref{eq:root-equation} becomes
\[
\frac{1-2r+r^2}{1-r^2}-\frac{2r}{1-r^2}  - \alpha =0,
\]
or equivalently
\[
(1+\alpha) r^2 - 4r + 1-\alpha=0.
\]
The two roots of this quadratic are
\[
r_\pm=\frac{4\pm\sqrt{16-4(1+\alpha)(1-\alpha)}}{2(1+\alpha)}
=\frac{2\pm\sqrt{3+\alpha^2}}{1+\alpha}.
\]
For \(0\le\alpha<1\) the smaller root
\[
r_-=\dfrac{2-\sqrt{3+\alpha^2}}{1+\alpha}
\]
is the unique root in \((0,1)\) (the other root exceeds \(1\)). In particular, for \(\alpha=0\) this gives
\[
r=2-\sqrt3.
\]
Indeed, in this case the radius of convexity the Volterra--type integral operator $T_g$ is equal to familiar radius of convexity for the class $\mathcal{U}$ (see \cite[Theorem 2.2.22]{Graham}).\\

\paragraph{\textbf{Case 2:} \(B=0\) (general \(A\)).} The equation \eqref{eq:root-equation} becomes
\[
1-Ar-\frac{Ar}{1-Ar}-\alpha = 0
\]
or
\[
A^2 r^2 + A(\alpha-3)r + (1-\alpha) = 0.
\]
Hence, the roots are
\[
r_\pm=\frac{3-\alpha\pm\sqrt{\alpha^2-2\alpha+5}}{2A}.
\]
The admissible root (the one in \((0,1)\)) is the one that gives \(0<r<1\) for the given \(A>0\).  
A notable subcase: if \(A=1,\ \alpha=0\) the positive root is
\[
r=\frac{3-\sqrt5}{2},
\]
which is the familiar value that often appears in related radius problems.
\end{remark}


\begin{theorem}\label{theorem P[A,B] F}
Let $A,B\in\mathbb{R}$ with $-1\le B<A\le 1$.  
If $f\in\mathcal{P}[A,B]$ and $g\in \mathcal{F}$ with ${\rm ord}\mathcal{F}=\delta$,
then the Volterra-type integral operator $T_g$ is convex of order $\alpha$ in the disk $|z|<r_c(A,B,\alpha,\delta)$, where \(r_c(A,B,\alpha,\delta)\) is the unique root of the equation
\begin{equation*}\label{r c alpha gamma corrected final 2}
\frac{1-2\delta r+r^2}{1-r^2}-\frac{(A-B)r}{(1-A r)(1-B r)}-\alpha =0,
\end{equation*}
in the interval $(0,1)$.
The radius $r_c(A,B,\alpha,\delta)$ is the best possible.
\end{theorem}

\begin{proof}
If $g\in \mathcal{F}$, then by Lemma \ref{lem linear invariant}, we have
\begin{equation*}
 {\rm Re}\,\left\{ 1 + z \frac{g''(z)}{g'(z)} \right\}\geq \frac{1-2\delta r+r^2}{1-r^2},\quad(|z|=r).   
\end{equation*}
Therefore, since $f\in\mathcal{P}[A,B]$, by \eqref{eq:1+zT''} and by the last inequality above, we get 
\begin{equation*}
{\rm Re}\,\left\{1 + z \frac{T_g''(z)}{T_g'(z)} \right\}\geq -\frac{(A-B)r}{(1-A r)(1-B r)}+\frac{1-2\delta r+r^2}{1-r^2}=:K(r).
\end{equation*}
Consequently, \(T_g\) is convex of order \(\alpha\) in \(|z|<r\) whenever $K(r,\alpha):=K(r)-\alpha>0$.
Note \(K(0,\alpha)=1-\alpha>0\) and tends to \(-\infty\) as \(r\to1^-\). Additionally, \(K\) is continuous and decreases on the interval \((0, 1)\). Therefore, there exists exactly one root, denoted by \(r_c(A, B, \alpha, \delta)\), in the interval \((0, 1)\). This indicates that \(r_c(A, B, \alpha, \delta)\) is the convexity radius of \(T_g\).

To show sharpness, we choose 
\[
f_0(z)=\frac{1-A z}{1-B z}\in\mathcal P[A,B],
\]
and 
\[
g_0(z)=\frac{1}{2\delta}\left[\left(\frac{1-z}{1+z}\right)^\delta-1\right],\quad {\rm ord}\mathcal F=\delta,
\]
which belongs to \(\mathcal F\). For these choices, one checks for real \(0<r<1\) that equality holds in both estimates used above:
\[
{\rm Re}\,\left\{ z\frac{f_0'(z)}{f_0(z)}\right\}= -\frac{(A-B)r}{(1-Ar)(1-Br)}, \quad 
{\rm Re}\,\left\{1+z\frac{g_0''(z)}{g_0'(z)}\right\}=\frac{1-2\delta r + r^2}{1-r^2}.
\]
Hence, for the pair \((f_0,g_0)\), the function \(K(r,\alpha)\) equals the left–hand side of the convexity condition and vanishes exactly at the unique root \(r_c(A,B,\alpha,\delta)\). Therefore, the radius cannot be improved in general, which proves sharpness.
\end{proof}
If we take $A=1$, $B=-1$, $\alpha=0$, and $\delta=1$ in the above Theorem \ref{theorem P[A,B] F}, we get:
\begin{corollary}  
Let $f$ be a Carath\'eordory function and let $g$ be a convex univalent function. Then the Volterra-type integral operator $T_g$ is a convex univalent function in the disk $|z|<2-\sqrt{3}$, where \(2-\sqrt{3}\) is the unique root of the equation
\begin{equation*}
r^2 - 4r+1=0.
\end{equation*}
The result is sharp.
\end{corollary}

The next theorem is the following.
\begin{theorem}\label{theorem f P[A,B] g in U sharp}
Let $A,B\in\mathbb{R}$ with $-1\le B<A\le 1$.  
Let $f\in\mathcal{P}[A,B]$ and let $g$ be a univalent function. Then the Volterra-type integral operator $T_g$ is convex of order $\alpha$ in the disk $|z|<r_c(A,B,\alpha)$, where \(r_c(A,B,\alpha)\) is the unique root of the equation
\begin{equation}\label{eq:rc-AB-alpha-improved}
\frac{r^2 - 4r+1}{1-r^2}-\frac{(A-B)r}{(1-A r)(1-B r)}  - \alpha = 0.
\end{equation}
The radius $r_c(A,B,\alpha)$ is the best possible.
\end{theorem}

\begin{proof}
  Let $f\in\mathcal{P}[A,B]$ and $g \in \mathcal{U}$. From \eqref{eq:1+zT''}, Lemma \ref{lem:est-Cara-Janowski-sharp}, and the standard bound for univalent functions (Lemma \ref{lem estimate univalent functions}), we have
  \begin{align*}
    {\rm Re}\,\left\{ 1 + z \frac{T_g''(z)}{T_g'(z)} \right\} 
    &= {\rm Re}\,\left\{ \frac{z f'(z)}{f(z)} + 1 + z \frac{g''(z)}{g'(z)} \right\} \\
    &\geq -\frac{(A-B)r}{(1-A r)(1-B r)}+\frac{r^2-4r+1}{1-r^2}=:L(r).
  \end{align*}
  Hence, $T_g$ is convex of order alpha in the disk $|z| \le r<1$ if $L(r,\alpha):=L(r)-\alpha>0$.

We see that \(L(r,\alpha)\) is continuous on \([0,1)\), \(L(0,\alpha)=1-\alpha>0\), and \(\lim_{r\to1^-}L(r,\alpha)=-\infty\). A routine calculation shows \(L'(r,\alpha)<0\) for \(0<r<1\) under the parameter hypotheses \(-1\le B<A\le1\). Therefore, \(L(r,\alpha)\) is strictly decreasing, so there is exactly one root, denoted by \(r_c(A,B,\alpha)\), in \((0,1)\) such that \(L(r_c(A,B,\alpha),\alpha)=0\).

We show that the radius $r_c(A,B,\alpha)$ is the best possible. We take the Janowski extremal
\[
f_0(z)=\frac{1 - A z}{1 - B z}\in\mathcal P[A,B],
\]
and one of the rotations of the Koebe function. One convenient choice is
\[
g_0(z) = -k(-z) = \frac{z}{(1+z)^2},
\]
which is univalent and satisfies $g_0(0)=0$ and $g_0'(0)=1$.
 A direct computation gives for \(z=r>0\)
\[
{\rm Re}\,\left\{1 + r\frac{g_0''(r)}{g_0'(r)}\right\} = \frac{r^2-4r+1}{1-r^2},
\]
so equality is attained in the univalent bound at the positive real point \(r\).
Thus, for this pair \((f_0,g_0)\) the left-hand side of \eqref{eq:rc-AB-alpha-improved} equals zero exactly at \(r_c(A,B,\alpha)\). Therefore, the radius cannot be improved in general, and the result is sharp.

\end{proof}

\begin{remark}
  Taking $A=1$, $B=-1$, and $\alpha=0$ in Theorem \ref{theorem f P[A,B] g in U sharp}, we see that if $f$ is a Carath\'eodory function and $g$ is univalent, then the radius of convexity of the Volterra-type integral operator $T_g$ is $3-2\sqrt{2}$. 
\end{remark}

\begin{theorem}\label{theorem f starlike g in G beta sharp improved}
Let $A,B\in\mathbb{R}$ with $-1\le B<A\le 1$, and let $0\le \alpha<1$ and $\beta\in \mathbb R$.  
If $f\in\mathcal{P}[A,B]$ and $g\in\mathcal{G}(\beta)$, then the Volterra-type integral operator $T_g$ is convex of order $\alpha$ in the disk $|z|<r_c(A,B,\alpha,\beta)$,  
where \(r_c(A,B,\alpha,\beta)\) is the unique root of the equation
\begin{equation}\label{eq:rABalphaBeta}
1-\alpha-\frac{(A-B)r}{(1-A r)(1-B r)}-\frac{\beta r}{1-r}=0.
\end{equation}
The result is sharp.
\end{theorem}

\begin{proof}
  Let $f\in\mathcal{P}[A,B]$ and $g \in \mathcal{G}(\beta)$. It follows from \eqref{eq:1+zT''}, Lemma \ref{lem:est-Cara-Janowski-sharp}, and Lemma \ref{lem Obradovic2013} that
\[
{\rm Re}\!\left\{1+z\frac{T_g''(z)}{T_g'(z)}\right\}
\ge 1-\frac{(A-B)r}{(1-A r)(1-B r)}-\frac{\beta r}{1-r}.
\]
Therefore, \(T_g\) is convex of order \(\alpha\), if
\[
G(r,\alpha):=1-\alpha-\frac{(A-B)r}{(1-A r)(1-B r)}-\frac{\beta r}{1-r}>0.
\]
A simple check shows that $G(r,\alpha)$ has a unique root, denoted by \(r_c(A,B,\alpha,\beta)\), in the interval  $(0,1)$.  
Therefore, \(T_g\) is convex of order \(\alpha\) for all \(|z|<r_c(A,B,\alpha,\beta)\).

Equality in the real-part estimates is attained for the extremal pair
\[
f_0(z)=\frac{1+Az}{1+Bz}\in\mathcal{P}[A,B], \quad 
g_0(z)=
\begin{cases}
\dfrac{(1-z)^{\beta+1}-1}{\beta+1}, & \beta\neq1,\\[6pt]
-\log(1-z), & \beta=1.
\end{cases}
\]
For this pair, equality holds in~\eqref{eq:rABalphaBeta} when \(|z|=r_c(A,B,\alpha,\beta)\), proving sharpness.
This completes the proof.
\end{proof}

Finally, we have.

\begin{theorem}\label{theorem f PAB g in Vk improved}
Let $A,B\in\mathbb{R}$ with $-1\le B<A\le1$, and let $g\in V_k$ with $k\ge2$.  
If $f\in\mathcal{P}[A,B]$, then the Volterra--type integral operator $T_g$ is convex of order $\alpha$ in the disk $|z|<r_c(A,B,\alpha,k)$, where $r_c(A,B,\alpha,k)$ is the unique root of the equation
\begin{equation}\label{eq:rc-AB-k}
1-\alpha-\frac{(A-B)r}{(1-A r)(1-B r)} + \frac{2r^2 - k r}{1-r^2}=0.
\end{equation}
Moreover, this radius $r_c(A,B,\alpha,k)$ is the best possible.
\end{theorem}

\begin{proof}
Let $f\in\mathcal{P}[A,B]$ and $g\in V_k$.  
For $|z|=r<1$, using \eqref{eq:1+zT''}, Lemma \ref{lem:est-Cara-Janowski-sharp}, and Rabertson estimate \eqref{ineq Vk},
we obtain
\[
{\rm Re}\!\left\{1+z\frac{T_g''(z)}{T_g'(z)}\right\}
\ge 1 - \frac{(A-B)r}{(1-A r)(1-B r)} + \frac{2r^2 - k r}{1-r^2}
=: S(r).
\]
The function $T_g$ is convex of order $\alpha$ in $|z|<r$ whenever $S(r,\alpha)=S(r)-\alpha>0$.  
The equality $S(r,\alpha)=0$ yields the defining equation \eqref{eq:rc-AB-k} for the radius of convexity $r_c(A,B,\alpha,k)$, which has a unique root in $(0,1)$.

To verify sharpness, take the extremal pair
\[
f_0(z)=\frac{1+Az}{1+Bz}\in\mathcal{P}[A,B], 
\qquad
g_0(z)=\int_0^z (1-t)^{\frac{k}{2}-1}(1+t)^{-\left(1+\frac{k}{2}\right)}\,dt.
\]
For this $g_0$, equality holds in the Robertson estimate for $z=r>0$:
\[
z\frac{g_0''(z)}{g_0'(z)}=\frac{2r^2 - k r}{1-r^2}.
\]
Substituting $f_0$ and $g_0$ into \eqref{eq:1+zT''} gives
\[
1 + r\frac{T_{g_0}''(r)}{T_{g_0}'(r)} 
= 1 - \frac{(A-B)r}{(1-A r)(1-B r)} + \frac{2r^2 - k r}{1-r^2},
\]
which vanishes precisely when $r=r_c(A,B,\alpha,k)$.  
Thus, the bound cannot be improved, and the result is sharp.
\end{proof}

Solving the equation \eqref{eq:rc-AB-k} for $A=1,\;B=-1$, gives the radius
\begin{equation*}
   r_c=\frac{k+2-\sqrt{(k+2)^2-4(1-\alpha^2)}}{2(1+\alpha)},
\end{equation*}
which reduces to the well-known radius of convexity $2-\sqrt{3}$ for $k=2$ and $\alpha=0$. In other words, if $f\in\mathcal{P}$ and $g\in V_2$, then the convexity radius of the Volterra-type integral operator $T_g$ is $2-\sqrt{3}$.


Table \ref{tab:radii} summarizes the radius of convexity results established in this paper for the Volterra-type operator $T_g$ when $f \in \mathcal{P}$ and $g$ belongs to various classes of functions.

\begin{table}[h!]
\centering
\begin{tabular}{|c|c|c|}
\hline
\textbf{Class of $f$} & \textbf{Class of $g$} & \textbf{Radius of Convexity of order $\alpha\in[0,1)$} \\
\hline
$\mathcal{P}$  & $\mathcal{K}$ & $\dfrac{2-\sqrt{3+\alpha^2}}{1+\alpha}$\\
\hline
$\mathcal{P}$ & $ \mathcal{F}$ ($\delta={\rm ord}\,\mathcal{F}$) & $\dfrac{1+\delta-\sqrt{\delta^2+2\delta+\alpha^2}}{1+\alpha}$ \\
\hline
$\mathcal{P}$ & $\mathcal{U}$  & $\dfrac{3-\sqrt{8+\alpha^2}}{1+\alpha}$ \\
\hline
$\mathcal{P}$ & $\mathcal{G}(\beta)$ ($0<\beta\le 1$) & $\dfrac{-(2+\beta)+\sqrt{(\beta+2)^2+4(1-\alpha)(\beta+1-\alpha)}}{2(\beta+1-\alpha)}$ \\
\hline
$\mathcal{P}$ & $V_k$ ($k\ge 2$) & $\dfrac{k+2-\sqrt{(k+2)^2-4(1-\alpha^2)}}{2(1+\alpha)}$ \\
\hline
\end{tabular}
\caption{Summary of radii of convexity of order $\alpha$ of the Volterra-type integral operator $T_g$.}
\label{tab:radii}
\end{table}


\section{Higher-Order Volterra-Type Integral Operators}\label{sec:higher-order}

Tong \emph{et al.}~\cite{Tong} recently introduced a family of higher-order integral operators \(I^{(n)}_g\) acting on Bloch-type spaces \(\mathcal{B}^{\alpha}\), which generalize and include, as a special case, the earlier operator \(I_{g,a}\) proposed by Chalmoukis~\cite{Chal}. 
Earlier, Arroussi \emph{et al.}~\cite{Arro} investigated a generalized Volterra-type integral operator on large Bergman spaces, while Li and Stevi\'{c}~\cite{Li-S} had previously extended the classical Volterra operator \(T_g\) to a more general form. 
Motivated by these developments, and in particular by the higher-order framework introduced by Tong \emph{et al.}, we now introduce a normalized higher-order analog of the classical Volterra-type integral operator suitable for geometric function theory.

\begin{definition}\rm
Let $f$ and $g$ be analytic in the unit disk $\mathbb{D}$ such that $fg'$ is analytic.  
Define
\begin{equation*}
T_{g,0}[f](z)=f(z)g'(z)=:h(z).
\end{equation*}
The $n$-th order Volterra-type integral operator for $n\in\mathbb{N}=\{1,2,\ldots\}$ is defined by
\begin{equation}\label{eq:Tn_def}
T_{g,n}[f](z)
:=\underbrace{\int_0^z\int_0^{t_1}\cdots\int_0^{t_{n-1}}}_{n\ \text{times}} h(t_n)\,dt_n\cdots dt_1,
\quad z\in\mathbb{D},
\end{equation}
where the integration variables satisfy
\begin{equation}\label{not-tn}
0 \le t_n \le t_{n-1} \le \dots \le t_1 \le |z| < 1.
\end{equation}
In the iterated integral \eqref{eq:Tn_def}, the notation \eqref{not-tn} is to be understood via the standard radial parametrization \(t_j=u_j z\), $j=1,2,\ldots,n$, with \(0\le u_n\le\cdots\le u_1\le1\). 

In particular, if $n=1$, then
\begin{equation*}
  T_{g,1}[f](z)=\int_0^z h(t)\,dt=\int_0^z f(t)g'(t)dt=T_g [f](z),\quad z\in \mathbb D
\end{equation*}
and if $n=2$, then
\begin{equation*}
  T_{g,2}[f](z)=\int_0^z\int_0^{t_1} h(t_2) dt_2 dt_1
  =\int_0^z (z-t)h(t)dt=\int_0^z (z-t)f(t)g'(t)dt,\quad z\in \mathbb D.
\end{equation*}
\end{definition}

The second equality of the last relation is attained by taking the straight line segment from $0$ to the complex point $z$ and parameterizing every point on it by $t=uz$ with
$u\in[0,1]$. Then the iterated integral becomes an integral over $0\leq u_2\leq u_1\leq 1$.
\begin{proposition}\label{prop:Taylor-Tgn}
Let \(f,g\in\mathcal{H}(\mathbb{D})\) with
\[
f(z)=\sum_{m=0}^{\infty} a_m z^m, 
\quad 
g'(z)=\sum_{m=0}^{\infty} b_m z^m,
\]
such that \(h(z)=\sum_{m=0}^{\infty} c_m z^m\), where
\[
c_m=\sum_{j=0}^{m} a_j b_{m-j}.
\]
Then, the higher-order Volterra-type operator \(T_{g,n}[f]\) defined as in \eqref{eq:Tn_def} admits the Taylor expansion
\begin{equation*}\label{eq:Tgn-series}
T_{g,n}[f](z)
= \sum_{m=0}^{\infty} \frac{c_m}{(m+1)(m+2)\cdots(m+n)}\,z^{m+n}.
\end{equation*}
\end{proposition}

The single convolution integral for \( T_g \) can be expressed as follows:

\begin{proposition}\label{prop:iterated-convolution}
Let $T_{g,n}$ be defined by \eqref{eq:Tn_def} and $h=fg'$. Then the following identity holds for all $n\in\mathbb N$:
\begin{equation*}\label{eq:iterated-to-convolution}
T_{g,n}[f](z)=\frac{1}{(n-1)!}\int_{0}^{z}(z-t)^{\,n-1}h(t)\,dt,\quad z\in\mathbb{D}.
\end{equation*}
\end{proposition}

\begin{proof}
Fix \(z\in\mathbb{D}\) (the case \(z=0\) is trivial).  
Let \(h(t)=f(t)g'(t)\), and parameterize the straight line segment from \(0\) to \(z\) by
\[
t=u z,\quad u\in[0,1].
\]
Then \(dt=z\,du\), and the iterated integral in \eqref{eq:Tn_def} can be written as
\[
T_{g,n}[f](z)
 = \int_{0}^{z}\int_{0}^{t_1}\!\cdots\!\int_{0}^{t_{n-1}} h(t_n)\,dt_n\cdots dt_1
 = z^n \!\!\int_{0\le u_n\le\cdots\le u_1\le1} h(u_n z)\,du_n\cdots du_1.
\]
For fixed \(u_n=u\in[0,1]\), the inner region
\[
\{(u_1,\dots,u_{n-1}) : u\le u_{n-1}\le\cdots\le u_1\le1\}
\]
is an \((n-1)\)-simplex in \(\mathbb{R}^{\,n-1}\) of volume \((1-u)^{\,n-1}/(n-1)!\).
Since the integrand is continuous on the compact simplex, Fubini's theorem allows us to change the order of integration, giving
\[
T_{g,n}[f](z)
= \frac{z^n}{(n-1)!}\int_{0}^{1} (1-u)^{\,n-1} h(u z)\,du.
\]
Substituting \(t=u z\) and \(du = dt/z\), gives
\[
T_{g,n}[f](z)
 = \frac{z^n}{(n-1)!}\int_{0}^{z}\!\!\left(1-\frac{t}{z}\right)^{\!n-1} h(t)\,\frac{dt}{z}
 = \frac{1}{(n-1)!}\int_{0}^{z} (z-t)^{\,n-1} h(t)\,dt.
\]
This proves the desired convolution representation.
\end{proof}

\begin{proposition}\label{prop:Tn_derivatives}
 For every $n\in\mathbb{N}$, we have
\begin{equation*}
\big(T_{g,n}[f]\big)'(z)=T_{g,n-1}[f](z),
\end{equation*}
and for every $n\in\mathbb{N}\setminus\{1\}$,
\begin{equation*}
\big(T_{g,n}[f]\big)''(z)=T_{g,n-2}[f](z).
\end{equation*}
\end{proposition}

\begin{proof}
Let \(h(t)=f(t)g'(t)\). Fix \(r\in(0,1)\) and consider \(z\) with \(|z|\le r\). For \(n\ge1\) define
\[
K(z,t):=\frac{(z-t)^{\,n-1}}{(n-1)!}\,h(t),\quad 0\le t\le z,
\]
such that by Proposition \ref{prop:iterated-convolution}
\[
T_{g,n}[f](z)=\int_0^z K(z,t)\,dt.
\]
Since \(h\) is analytic in \(\D\), it is continuous on the compact set
\(\{t: |t|\le r\}\). Hence, the function
\(K(z,t)\) is continuous on the compact set
\(\{(z,t): |z|\le r,\ 0\le t\le z\}\). Moreover, for \(n\ge2\) the partial derivative
\[
\frac{\partial}{\partial z}K(z,t)=\frac{(n-1)(z-t)^{\,n-2}}{(n-1)!}\,h(t)
=\frac{(z-t)^{\,n-2}}{(n-2)!}\,h(t)
\]
exists and is continuous on the same compact set; for \(n=1\) we have \(\partial_z K(z,t)=0\).
Therefore, by the Leibniz rule 
\[
\frac{d}{dz}\int_0^z K(z,t)\,dt
=K(z,z)+\int_0^z \frac{\partial}{\partial z}K(z,t)\,dt,
\]
and the identity is justified since the integrand and its \(z\)-derivative are continuous on the compact domain.

\medskip
\noindent\textbf{(i) First derivative.} For \(n\ge2\),
\[
\bigl(T_{g,n}[f]\bigr)'(z)
=0+\int_0^z \frac{(z-t)^{\,n-2}}{(n-2)!}\,h(t)\,dt
=\;T_{g,n-1}[f](z).
\]
For \(n=1\) the Leibniz rule gives
\[
\bigl(T_{g,1}[f]\bigr)'(z)=h(z)=T_{g,0}[f](z),
\]
concluding \(\bigl(T_{g,n}[f]\bigr)'=T_{g,n-1}[f]\) for all $n\in\mathbb N$.

\medskip
\noindent\textbf{(ii) Second derivative.} If \(n\ge2\) then \(n-1\ge1\), and by part (i)
\[
\bigl(T_{g,n}[f]\bigr)''(z)
=\bigl(T_{g,n-1}[f]\bigr)'(z)
= T_{g,n-2}[f](z).
\]
When \(n=2\) this reads \((T_{g,2}[f])''=T_{g,0}[f]=h\), which agrees with a direct computation.
This completes the proof.
\end{proof}

It follows directly from the definition that \(T_{g,n}[f](0)=0\) for all \(n\in\mathbb{N}\). 
Moreover, since \((T_{g,n}[f])'(z)=T_{g,n-1}[f](z)\) by Proposition~\ref{prop:Tn_derivatives}, we have 
\[
T_{g,n}[f](0) =0 \quad \text{for all } n\ge2,
\]
while for \(n=1\), \((T_{g,1}[f])'(0)=h(0)=f(0)g'(0)\neq0\) in general. 
Thus, for \(n\ge2\), the function \(T_{g,n}[f]\) vanishes to order at least two at the origin.

\medskip
To investigate geometric properties such as the radius of convexity, 
it is convenient to introduce a normalized form of the operator. 
We define the normalized higher-order Volterra-type operator
\(\widetilde{T}_{g,n}[f]\), which is analytic in \(\mathbb{D}\) and satisfies the standard normalization conditions
\[
\widetilde{T}_{g,n}[f](0) =0,\quad (\widetilde{T}_{g,n}[f])'(0)=1.
\]

\begin{definition}[Normalized higher-order Volterra-type operator]\label{def:normalized-higher}
\rm
Let \(f,g\in\mathcal{H}(\mathbb{D})\) with \(f(0)=1\) and \(g'(0)\neq0\).
We define the normalized higher-order Volterra-type operator by
\begin{equation}\label{eq:Tgn-normalized}
\widetilde T_{g,n}[f](z)
:=\frac{n!}{g'(0)}\;\frac{T_{g,n}[f](z)}{z^{\,n-1}},\quad g'(0)\neq0.
\end{equation}
If we let $h(z)=\sum_{m\ge0}c_m z^m$ with $c_0=g'(0)$, then
\[
\widetilde T_{g,n}[f](z)
= z + A_{n,1} z^2 + A_{n,2} z^3 + \cdots,
\quad
A_{n,m}=\frac{n!\,c_m}{(m+n)!\,c_0},\quad(m\ge1).
\]
Therefore,  \(\widetilde T_{g,n}[f]\in\mathcal{A}\) for all \(n\ge1\) and
\[
\widetilde T_{g,n}[f](0) =0,
\quad 
(\widetilde T_{g,n}[f])'(0)=1.
\]
\end{definition}
This normalization is essential when investigating geometric properties
such as univalence, starlikeness, and convexity.
Differentiating~\eqref{eq:Tgn-normalized}, one obtains the logarithmic derivative
\[
1+z\frac{\widetilde T_{g,n}''(z)}{\widetilde T_{g,n}'(z)}
=1-n+\frac{zN'(z)}{N(z)}, 
\quad 
N(z):=zT_{g,n}'[f](z)-(n-1)T_{g,n}[f](z).
\]
Since \(T_{g,n}[f](z)=O(z^{n})\) as \(z\to0\), and using 
\[
\frac{zN'(z)}{N(z)}\longrightarrow n\quad\text{as }z\to0,
\]
we have
\[
\lim_{z\to0}\left(1+z\frac{\widetilde T_{g,n}''(z)}{\widetilde T_{g,n}'(z)}\right)
=1.
\]
Consequently, the convexity condition
\[
{\rm Re}\,\left\{1+z\frac{\widetilde T_{g,n}''(z)}{\widetilde T_{g,n}'(z)}\right\}>\alpha,
\quad 0\le\alpha<1,
\]
is automatically satisfied in a neighborhood of the origin for the normalized operator.
Hence, the operator~\eqref{eq:Tgn-normalized} provides a natural analytic setting
for studying convexity radii of \(T_{g,n}[f]\).

\begin{proposition}\label{prop:normalized-convolution}
Let \(f,g\in\mathcal{H}(\mathbb{D})\) with \(f(0)=1\) and \(g'(0)\neq0\).
Then the normalized higher-order Volterra-type operator \(\widetilde{T}_{g,n}\),
defined by~\eqref{eq:Tgn-normalized}, admits the convolution-type integral representation
\begin{equation}\label{eq:normalized-convolution}
\widetilde{T}_{g,n}[f](z)
=\frac{n}{g'(0) z^{\,n-1}}
\int_{0}^{z}(z-t)^{\,n-1}f(t)g'(t)\,dt,
\quad z\in\mathbb{D}.
\end{equation}
In particular, \(\widetilde{T}_{g,n}[f](0) =0\) and \((\widetilde{T}_{g,n}[f])'(0)=1.\)
\end{proposition}

\begin{proof}
This follows from Proposition~\ref{prop:iterated-convolution}.
\end{proof}

\begin{example}\label{ex-f0 g0-identity}\rm
Consider the simplest case
\[
f_0(z)=1,\qquad g_0(z)=z.
\]
Then, \(h_0(t)=f_0(t)g_0'(t)=1\), and for every \(n\ge1\),
\[
\widetilde{T}_{g_0,n}[f_0](z)
=\frac{n}{z^{\,n-1}}\int_0^z (z-t)^{\,n-1}dt
=\frac{n}{z^{\,n-1}}\cdot\frac{z^n}{n}
=z.
\]
\end{example}

\begin{proposition}\label{prop:normalized-Tn-derivatives}
Let \(f,g\in\mathcal{H}(\mathbb{D})\) with \(f(0)=1\) and \(g'(0)\neq0\).
For the normalized higher-order Volterra-type operator
$\widetilde{T}_{g,n}$
the following identities hold for every \(n\in\mathbb{N}\) and \(z\in\mathbb{D}\setminus\{0\}\):
\begin{equation}\label{eq:Tn-derivative1-corr}
\big(\widetilde{T}_{g,n}[f]\big)'(z)
=\frac{1}{z}\Big(\frac{n}{g'(0)}\,\widetilde{T}_{g,n-1}[f](z)
-(n-1)\,\widetilde{T}_{g,n}[f](z)\Big),
\end{equation}
and
\begin{align}\label{eq:Tn-derivative2-corr}
\big(\widetilde{T}_{g,n}[f]\big)''(z)
&=-\frac{1}{z^2}\Big(\frac{n}{g'(0)}\,\widetilde{T}_{g,n-1}[f](z)-(n-1)\widetilde{T}_{g,n}[f](z)\Big)\nonumber\\
&\quad +\frac{1}{z}\Big(\frac{n}{g'(0)}\big(\widetilde{T}_{g,n-1}[f]\big)'(z)
-(n-1)\big(\widetilde{T}_{g,n}[f]\big)'(z)\Big).
\end{align}
\end{proposition}

\begin{proof}
Set \(A:=g'(0)\) and note that
\[
\widetilde{T}_{g,n}[f](z)
= C_n\,\frac{T_{g,n}[f](z)}{z^{\,n-1}},
\quad 
C_n:=\frac{n!}{A}.
\]
Differentiating (for \(z\neq0\)) gives
\[
\big(\widetilde{T}_{g,n}[f]\big)'(z)
= C_n\Big[-(n-1)z^{-n}T_{g,n}[f](z) + z^{-(n-1)}T'_{g,n}[f](z)\Big].
\]
By Proposition~\ref{prop:Tn_derivatives}, \(T'_{g,n}[f]=T_{g,n-1}[f]\), so
\[
\big(\widetilde{T}_{g,n}[f]\big)'(z)
= C_n z^{-n}\big(zT_{g,n-1}[f](z)-(n-1)T_{g,n}[f](z)\big).
\]
Now express \(T_{g,n-1}\) and \(T_{g,n}\) in terms of their normalized counterparts:
\[
T_{g,n-1}[f](z)
=\frac{A}{(n-1)!}\,z^{\,n-2}\,\frac{\widetilde{T}_{g,n-1}[f](z)}{n-1},
\quad
T_{g,n}[f](z)
=\frac{A}{n!}\,z^{\,n-1}\,\widetilde{T}_{g,n}[f](z).
\]
Substituting these into the previous expression and simplifying yields
\[
\big(\widetilde{T}_{g,n}[f]\big)'(z)
=\frac{1}{z}\Big(\frac{n}{A}\widetilde{T}_{g,n-1}[f](z)
-(n-1)\widetilde{T}_{g,n}[f](z)\Big),
\]
which is \eqref{eq:Tn-derivative1-corr}.
Differentiating \eqref{eq:Tn-derivative1-corr} again (for \(z\neq0\)) gives
\[
\big(\widetilde{T}_{g,n}[f]\big)''(z)
=-\frac{1}{z^2}\Big(\frac{n}{A}\widetilde{T}_{g,n-1}[f]-(n-1)\widetilde{T}_{g,n}[f]\Big)
+\frac{1}{z}\Big(\frac{n}{A}\big(\widetilde{T}_{g,n-1}[f]\big)'
-(n-1)\big(\widetilde{T}_{g,n}[f]\big)'\Big),
\]
which is precisely \eqref{eq:Tn-derivative2-corr}. 
This completes the proof.
\end{proof}
\subsection{Convexity Radius of \texorpdfstring{$\widetilde{T}_{g,n}[f](z)$}{Tg,n[f](z)}}
We now discuss the normalized higher-order Volterra-type integral operator \eqref{eq:Tgn-normalized}. Normalization guarantees analyticity at the origin and a unit derivative, which enables the application of standard tools from geometric function theory to analyze properties of univalence and convexity.

For \(f,g\in\mathcal{H}(\D)\) with \(f(0)=1\) and \(g'(0)\neq0\), define
\begin{equation}\label{Im-z}
 I_m(z):=\int_0^z (z-t)^m h(t)\,dt,\quad m\ge0,
\end{equation}
where $h(t)=f(t)g'(t)$. Then the normalized higher-order Volterra operator admits the convolution representation
\begin{equation*}
\widetilde{T}_{g,n}[f](z)
=\frac{n}{g'(0)\,z^{\,n-1}}\,I_{n-1}(z),
\quad n=1,2,\ldots.
\end{equation*}
A direct computation yields
\begin{equation}\label{eq:key-identity}
1+z\frac{\widetilde{T}_{g_0,n}''[f_0](z)}{\widetilde{T}_{g_0,n}'[f_0](z)}
=1-n+(n-2)\,z\,\frac{zI_{n-3}(z)-I_{n-2}(z)}{zI_{n-2}(z)-I_{n-1}(z)},\quad n\ge3,
\end{equation}
with the obvious modifications for \(n=1,2\). 
Consequently, the normalized operator \(\widetilde{T}_{g,n}[f]\) is convex of order \(\alpha\) in \(|z|<r\)
if and only if
\[
{\rm Re}\left\{1-n+(n-2)\,z\,\frac{zI_{n-3}(z)-I_{n-2}(z)}{zI_{n-2}(z)-I_{n-1}(z)}\right\}>\alpha
\quad\text{for all }|z|<r.
\]
Therefore, the convexity radius \(r_{c,n}\) is the largest \(r\in(0,1]\) such that
\begin{equation*}\label{eq:general-radius}
\inf_{|z|=r}{\rm Re}\left\{1-n+(n-2)\,z\,\frac{zI_{n-3}(z)-I_{n-2}(z)}{zI_{n-2}(z)-I_{n-1}(z)}\right\}>\alpha.
\end{equation*}

The identity \eqref{eq:key-identity} is the fundamental reduction: all information about convexity of \(\widetilde T_{g,n}\) is contained in the single complex ratio
\begin{equation*}
\frac{zI_{n-3}(z)-I_{n-2}(z)}{zI_{n-2}(z)-I_{n-1}(z)}.
\end{equation*}
In practice, one obtains explicit radii by estimating this ratio from below on each circle \(|z|=r\). 

\begin{proposition}
Assume the iterated integrals \(I_m\) satisfy, uniformly for fixed \(z\in\mathbb{D}\), the large-\(n\) expansions
\[
\frac{I_{n-2}(z)}{I_{n-1}(z)}
= \frac{1}{n}\!\left(1+\frac{A_1(z)}{n}+\frac{A_2(z)}{n^2}+O(n^{-3})\right),\quad
\frac{I_{n-3}(z)}{I_{n-2}(z)}
= \frac{1}{n-1}\!\left(1+\frac{B_1(z)}{n-1}+O(n^{-2})\right).
\]
Then, as \(n\to\infty\) for fixed \(z\), the normalized convexity quantity satisfies
\[
1+z\frac{\widetilde T_{g,n}''(z)}{\widetilde T_{g,n}'(z)}
= 1 - n + z + \frac{z(A_1(z)-2)}{n} + O(n^{-2}).
\]
\end{proposition}

\begin{proof}
Let \(a=I_{n-2}/I_{n-1}\) and \(b=I_{n-3}/I_{n-2}\). 
By \eqref{eq:key-identity} and the assumed expansions for \(a\) and \(b\), we have
\[
1+z\frac{\widetilde{T}_{g,n}''(z)}{\widetilde{T}_{g,n}'(z)}
= 1 - n + (n-2)z\,a\,\frac{z b - 1}{z a - 1}.
\]
Substituting the asymptotic forms of \(a\) and \(b\), expanding the quotient \((z b - 1)/(z a - 1)\) up to \(O(n^{-2})\), and simplifying, we obtain
\[
1 - n + z + \frac{z(A_1(z) - 2)}{n} + O(n^{-2}),
\]
as claimed.
\end{proof}

Thus, the normalized convexity quantity grows negatively without bound as $n\to \infty$, indicating that additional normalization or scaling would be required to obtain a finite asymptotic limit.

Let us continue with the following two examples:

\begin{example}\label{ex:normalized-trivial}\rm
Let \(f_0(z)\) and \(g_0(z)\) be defined as in Example \ref{ex-f0 g0-identity}.
Hence, \(\widetilde{T}_{g_0,n}[f_0]\) is the identity map, which is convex (and univalent) in the entire unit disk. 
This example shows that, for trivial analytic pairs \((f_0,g_0)\), normalization preserves the full unit convexity radius.
\end{example}

\begin{example}\label{ex:normalized-exp}\rm
Let \(f_0(z)=1+Az\) and \(g_0'(z)=e^{-Bz}\) with real \(A\) and \(B\). Then
\[
\widetilde{T}_{g_0,n}[f_0](z)
=\frac{n!}{z^{\,n-1}}\int_{0}^{z}\frac{(z-t)^{\,n-1}}{(n-1)!}\,(1+At)e^{-Bt}\,dt
=\frac{n}{z^{\,n-1}}I_{n-1}(z),
\]
where $I_{n-1}$ defined as in \eqref{Im-z} with $h(z)=f_0(z)g'_0(z)$.
The radius of convexity \(\tilde r_{c,n}\) is the largest \(r\in(0,1]\) such that
\[
\inf_{|z|=r}{\rm Re}\!\left\{1+z\frac{\widetilde{T}_{g_0,n}''[f_0](z)}{\widetilde{T}_{g_0,n}'[f_0](z)}\right\}>\alpha
\quad\text{for all }|z|<r.
\]
By changing variables \(u=z-t\) in \eqref{Im-z}, we can rewrite
\[
I_m(z)
= e^{-Bz}\bigl((1+Az)J_m(z)-A J_{m+1}(z)\bigr),
\]
where
\begin{equation*}
J_k(z)=\int_0^z u^k e^{Bu}\,du,\quad k\geq0.
\end{equation*}
The recurrence
\begin{equation*}
J_k(z)=\frac{1}{B}\left(z^k e^{Bz}-k J_{k-1}(z)\right),\quad k\geq 1,
\end{equation*}
gives explicit formulas for low orders:
\begin{align*}
J_0(z)&=\frac{e^{Bz}-1}{B},&
J_1(z)&=\frac{e^{Bz}(Bz-1)+1}{B^2},\\
J_2(z)&=\frac{e^{Bz}(B^2z^2-2Bz+2)-2}{B^3},&
J_3(z)&=\frac{e^{Bz}(B^3z^3-3B^2z^2+6Bz-6)+6}{B^4}.
\end{align*}
Using these in \eqref{eq:key-identity}, the convexity expression \eqref{eq:key-identity} for \(n=3\) simplifies to
\[
1+z\frac{\widetilde{T}_{g_0,3}''[f_0](z)}{\widetilde{T}_{g_0,3}'[f_0](z)}
= -2 + z\cdot \frac{B z N_0(z) - N_1(z)}{z N_1(z) - \dfrac{N_2(z)}{B}},
\]
where $N_j(z)$, $j=0,1,2$, are defined by  
\begin{align*}
N_0(z)
&=(A+B)\bigl(e^{Bz}-1\bigr)-A B z,\\[2pt]
N_1(z)
&=B(1+Az)\bigl[e^{Bz}(Bz-1)+1\bigr]
-A\bigl[e^{Bz}(B^2 z^2-2Bz+2)-2\bigr],\\[2pt]
N_2(z)
&=B(1+Az)\bigl[e^{Bz}(B^2 z^2-2Bz+2)-2\bigr]
-A\bigl[e^{Bz}(B^3 z^3-3B^2 z^2+6Bz-6)+6\bigr].
\end{align*}  
The convexity boundary on the real axis is then determined by the transcendental equation
\[
(-2-\alpha)\!\left(r N_1(r)-\frac{N_2(r)}{B}\right)
+ r\!\left(B r N_0(r)-N_1(r)\right)=0,\quad 0<r<1.
\]
The largest root \(r=\tilde r_{c,3}\) gives the convexity radius for the third-order normalized operator.

We now test the conjectured scaling relation \(r_{c,n} \approx r_{c,1}/n\) 
for the normalized higher-order operator \(\widetilde{T}_{g_0,n}[f_0]\).
For \(n=1\) the normalized operator coincides with the standard one:
\[
\widetilde{T}_{g_0,1}[f_0](z)=\int_0^z (1+At)e^{-Bt}\,dt,
\]
hence, the convexity test is
\[
1+z\frac{\widetilde{T}_{g_0,1}''(z)}{\widetilde{T}_{g_0,1}'(z)}
=1+z\frac{A - B - AB z}{1+Az}.
\]
On the positive real axis \(z=r\) the boundary equation \(= \alpha\) becomes
\[
1 + r\frac{A - B - AB r}{1+Ar} = \alpha,
\]
or equivalently, 
\[
AB\,r^2 - (2A - B - \alpha A)\,r - (1-\alpha)=0.
\]
Therefore, the admissible positive root (when it lies in \((0,1)\)) is
\[
r_{c,1} \;=\; \frac{(2A - B - \alpha A) + \sqrt{(2A - B - \alpha A)^2 + 4AB(1-\alpha)}}{2AB},
\]
with the usual caveats when \(AB=0\) (those cases are handled separately).

For \(A=1,\ B=1,\ \alpha=0\) the quadratic becomes \(r^2 - r -1 = 0\). The positive root is \(r=(1+\sqrt5)/2\approx1.618>1\), so there is no root in \((0,1)\). Hence, for this model, one has \(r_{c,1}\ge1\).

\smallskip

For \(n=2\) the normalized operator is
\[
\widetilde{T}_{g_0,2}[f_0](z)
=\frac{2}{z}\int_0^z (z-t)(1+At)e^{-Bt}\,dt
= \frac{2 I_1(z)}{z},
\]
where \(I_1(z)=\int_0^z (z-t)(1+At)e^{-Bt}\,dt\). Differentiation yields
\[
\widetilde{T}_{g_0,2}'(z)
=2\!\left[-\frac{I_1(z)}{z^2}+\frac{I_0(z)}{z}\right],
\qquad
\widetilde{T}_{g_0,2}''(z)
=2\!\left[\frac{2I_1(z)}{z^3}-\frac{2I_0(z)}{z^2}+\frac{h(z)}{z}\right],
\]
with \(I_0(z)=\int_0^z(1+At)e^{-Bt}\,dt\) and \(h(z)=(1+Az)e^{-Bz}\). Using the general relation
\[
1+z\frac{\widetilde{T}_{g_0,n}''(z)}{\widetilde{T}_{g_0,n}'(z)}
=1+z\frac{I_{n-2}(z)}{I_{n-1}(z)}-(n-1),
\]
the convexity condition for \(n=2\) becomes
\[
{\rm Re}\!\left\{1+z\frac{I_0(z)}{I_1(z)}-1\right\}>\alpha
\quad\Longleftrightarrow\quad
{\rm Re}\!\left\{z\frac{I_0(z)}{I_1(z)}\right\}>\alpha.
\]
Hence, the convexity boundary satisfies
\[
z\frac{I_0(z)}{I_1(z)}=\alpha
\quad\Longleftrightarrow\quad
z\frac{B(2e^{Bz}-Bz-2)}{e^{Bz}(2Bz-3)+Bz+3}=\alpha.
\]
For \(A=1\), \(B=1\), and \(\alpha=0\), define
\[
F(r):=\frac{r(2e^{r}-r-2)}{e^{r}(2r-3)+r+3}.
\]
As shown in Figure~\ref{fig:F-r}, \(F(r)>0\) for all \(r\in(0,1)\); hence, no convexity root exists, and \(r_{c,2}\ge1\).
This confirms that, even for this simple normalized pair \((f_0,g_0)\),
the naive scaling law \(r_{c,n}\sim r_{c,1}/n\) fails.  
The ratios \(I_{n-2}(z)/I_{n-1}(z)\) depend nonlinearly on both \(A\) and \(B\),
and the normalization introduces an additional geometric correction term.
Thus, no universal scaling law holds without further structural assumptions.

\begin{figure}[ht]
\centering
\begin{tikzpicture}
  \begin{axis}[
    width=12cm,
    height=8cm,
    domain=0:2,
    samples=400,
    xlabel={$r$},
    ylabel={$F(r)$},
    axis x line=middle,
    axis y line=middle,
    xmin=0, xmax=1.2,
    ymin=-0.5, ymax=2.5,
    xtick={0,0.5,1},
    grid=both,
    major grid style={gray!30},
    minor grid style={gray!10},
    thick,
    smooth
  ]
    \addplot[blue,thick]
      {x*(2*exp(x) - x - 2)/(exp(x)*(2*x - 3) + x + 3)};
    \addplot[red,dashed,domain=0:1.2]{0};
    \addplot[black,dotted,domain=-1:4] coordinates {(1,-1) (1,3)};
    \node[anchor=west] at (axis cs:1.02,1.0){$r=1$};
  \end{axis}
\end{tikzpicture}
\caption{Plot of \(F(r)\).
The curve remains positive for \(0<r<1\), confirms no root exists in the interval $(0,1)$.}
\label{fig:F-r}
\end{figure}

\end{example}
\begin{remark}\label{rem:scaling-fails}
Example~\ref{ex:normalized-exp} illustrates that normalization significantly modifies the analytic structure of the operator and, consequently, its convexity radius.  
For \(A=B=1\) and \(\alpha=0\), the corrected computations show that both \(r_{c,1}\) and \(r_{c,2}\) exceed~1, so the convexity condition holds throughout the unit disk.  
In general, closed forms become rapidly intractable for large \(n\), yet numerical evidence suggests that \(\widetilde{T}_{g,n}\) retains the same asymptotic convexity radius as the unnormalized operator when \(A\) is small, with deviations growing linearly in \(A\) and sublinearly in \(n\).  
Thus, the naive scaling law \(r_{c,n}\approx r_{c,1}/n\) fails in general, the integral ratios are nonlinear in \((A,B,n)\), and normalization introduces additional geometric corrections that prevent a universal \(1/n\) behavior.  
\end{remark}

The preceding asymptotic analysis motivates the following problem:\\\\
\noindent\textbf{Open question.}
For a given admissible kernel \(g\), do there exist constants \(C_1(\alpha),C_2(\alpha)>0\), independent of \(n\), such that
\[
\frac{C_1(\alpha)}{n}\le r_{c,n}(\alpha)\le\frac{C_2(\alpha)}{n}, \quad n\ge1?
\]
Numerical evidence and model calculations in the exponential family (see Example~\ref{ex:normalized-exp}) suggest that such a uniform \(1/n\) two–sided behavior may hold for many structured subclasses.  
Determining the exact scope of the scaling law, as well as constructing clear counterexamples, remains an open and intriguing direction for future study.




\begin{thebibliography}{99}
\bibitem{Aleman} A. Aleman and A. Siskakis, \textit{An integral operator on $H^p$}, Complex Var. Theory Appl. {\bf28} (1995), 140--158.

\bibitem{Arro}
H. Arroussi, H. Gissy, and J.A. Virtanen, \textit{Generalized Volterra type integral operators on large Bergman spaces}. Bull. Sci. Math. \textbf{182} (2023) 103226.

\bibitem{Bernardi} S.D. Bernardi, \textit{Convex and starlike univalent functions}, Trans. Amer. Math. Soc. {\bf135} (1969), 429--446.

\bibitem{Chal}
N. Chalmoukis, \textit{Generalized integration operators on Hardy spaces.} Proc. Amer. Math. Soc. \textbf{148} (2020), 3325-3337.


\bibitem{Graham} I. Graham and G. Kohr, Geometric Function Theory in One and Higher Dimensions, Marcel Dekker Inc., New York, 2003.

\bibitem{KargarJMAA} R. Kargar, N.R. Pascu and A. Ebadian, \textit{Locally univalent approximations of analytic functions}, J. Math. Anal. Appl. {\bf453} (2017), 1005--1021.

\bibitem{LiStevic} S. Li and S. Stevi\'{c}, \textit{Volterra--type operators on Zygmund spaces}, J. Ineq. Appl., Vol. 2007, Article ID 32124, 10 pages, 2007.

\bibitem{Libera} R.J. Libera, \textit{Some classes of regular univalent functions}, Proc. Amer. Math. Soc. {\bf16} (1965), 755--758.

\bibitem{Livingston} A.E. Livingston, \textit{On the radius of univalence of certain analytic functions}, Proc. Amer. Math. Soc. {\bf17} (1966) 352--357.

\bibitem{Low} C. Loewner, \textit{Untersuchungen \"{u}ber die Verzerrung bei konformen Abbildungen des Einheitskreises $|z|<1$, die durch Funktionen mit nicht verschwindender Ableitung geliefert werden},
Ber. Verh. S\"{a}chs. Gess. Wiss. Leipzig, {\bf69} (1917), 89--106.

\bibitem{Li-S}
S. Li and S. Stevi\'{c}, \textit{Products of Volterra type operator and composition operator from H1 and Bloch spaces to the Zygmund space}. J. Math. Anal. Appl. \textbf{345} (2008), 40-52.


\bibitem{OPW} M. Obradovi\'{c}, S. Ponnusamy, and K.--J. Wirths, \textit{%
Cofficient charactrizations and sections for some univalent functions},
Sib. Math. J. {\bf 54} (2013), 679--696.

\bibitem{Owa1978} S. Owa, \textit{On the distortion theorems} I, Kyungpook Math. J. {\bf18} (1978), 53--59.

\bibitem{OwaSri} S. Owa and H.M. Srivastava, \textit{Univalent and starlike generalized hypergeometric functions}, Canad. J. Math. {\bf39} (1987), 1057--1077.

\bibitem{Ozaki} S. Ozaki, \textit{On the theory of multivalent functions}, Sci. Rep. Tokyo Bunrika Daigaku, {\bf4} (1941), 45--86.

\bibitem{Paa1} V. Paatero, \textit{\"{U}ber die konforme Abbildung mehrbl\"attriger Gebiete von beschr\"ankter Randdrehung}, Ann. Acad. Sci. Fennicae Ser. A. I. Math.-Phys. 1952 (1952), no. 128, 14 pp.

\bibitem{Pom} C. Pommerenke, \textit{Linear-invariante familien analytischer funktionen. I}, Math. Ann. {\bf 155} (1964), 108--154.

\bibitem{Pommerenke1977} C. Pommerenke, \textit{Schlichte Funktionen und analytische Funktionen von beschr\"{a}nkter mittlerer Oszillation}, Comment. Math. Helv. {\bf52} (1977), 591--602.

\bibitem{Robertson1969} M.S. Robertson, \textit{Coefficients of functions with bounded boundary rotation}, Canad. J. Math. {\bf21} (1969), 1477--1482.

\bibitem{Siskakis1987} A.G. Siskakis, \textit{Composition semigroups and the Ces\'{a}ro operator on $H^p$}, J. Lond. Math. Soc. {\bf36} (1987), 153--164.

\bibitem{Siskakis1990} A.G. Siskakis, \textit{The Ces\`{a}ro operator is bounded on $H^1$}, Proc. Amer. Math. Soc. {\bf110} (1990), 461--462.

\bibitem{Stevic} S. Stevi\'{c}, \textit{On a new integral--type operator from the Bloch space to Bloch--type spaces on the unit ball}, J. Math. Anal. Appl. {\bf354} (2009) 426--434.

\bibitem{Tong}
C. Tong, X. He and Z. Yang, \textit{Generalized Volterra-type integral operators between Bloch-type spaces}. arXiv:2405.16228v2 

\end{thebibliography}
\end{document}